\documentclass[11pt]{amsart}
\usepackage[all]{xy}
\usepackage{amssymb}
\usepackage{enumitem}
\usepackage{amsthm}
\usepackage{colortbl}
\usepackage[dvipsnames]{xcolor}
\usepackage{hyperref}
\hypersetup{colorlinks=true,linkcolor=blue,citecolor=magenta}
\usepackage{amsmath}
\usepackage{amscd,enumitem}
\usepackage{verbatim}
\usepackage{eurosym}
\usepackage{float}
\usepackage{cleveref}
\usepackage{color}
\usepackage{rotating}
\usepackage{dcolumn}
\usepackage[mathscr]{eucal}
\usepackage[all]{xy}
\usepackage{hyperref}
\usepackage{bbm}
\usepackage{makecell}
\usepackage{wasysym}
\usepackage[margin=1 in]{geometry} 
\usepackage{tikz}
\usepackage{graphicx}
\usepackage{mathtools}

\newcommand\myhdots{\hbox to 1.5em{.\hss.\hss.}}
\usetikzlibrary{automata,arrows,positioning,calc}
\DeclareFontFamily{U}{wncy}{}
    \DeclareFontShape{U}{wncy}{m}{n}{<->wncyr10}{}
    \DeclareSymbolFont{mcy}{U}{wncy}{m}{n}
    \DeclareMathSymbol{\Sh}{\mathord}{mcy}{"58} 
\newtheorem*{thm*}{Theorem}
\newtheorem*{conj*}{Conjecture}

\newtheorem{theorem}{Theorem}[section]
\newtheorem{cor}[theorem]{Corollary}

\newtheorem{lemma}[theorem]{Lemma}

\theoremstyle{definition}
\newtheorem{definition}[theorem]{Definition}

\title{An Asymptotic for Non-Wieferich Numbers}
\author{Casia Siegel}

\begin{document}

\subjclass[2020]{Primary 11N37}
\keywords{Weiferich numbers, Wieferich primes, cyclic numbers}

\begin{abstract} 
 A cyclic number is a positive integer $n$ such that $\gcd(n, \varphi(n)) = 1$. Erdős showed that the number of cyclic numbers up to $x$, $C(x)$, has $C(x) \sim e^{-\gamma}x/\log_3(x)$ asymptotically as $x$ tends to infinity. We define Wieferich numbers to be odd, positive integers $n$ such that $(n, (2^{\ell(n)}-1)/n) > 1$, where $\ell(n)$ is the smallest positive integer with $2^{\ell(n)} \equiv 1 \mod n$. Franco and Pomerance showed that non-Wieferich numbers have density zero among the odd integers. We show that the counting function of $n \leq x$ in the symmetric difference of non-Wieferich numbers and cyclic numbers is $O(x/(\log_3(x))^2)$, under a mild assumption on the counting function of Wieferich primes.

\end{abstract}

\maketitle 

\section{Introduction}

If one defines $f(n)$ to be the largest odd divisor of $3n+1$, the famous and unproven Collatz conjecture asserts that for $n\in \mathbb{N}$ the sequence $n, f(n), f(f(n)), \dots$ will eventually reach the number 1. However, if one were to change $3n+1$ to $qn+1$ for a positive odd integer $q$, this does not necessarily happen. For example, if we let $q=5$ not all sequences will end up at 1 (such as $n=13$ which has the repeating sequence $13,33,83,13,33,\dots$). \textit{Crandall numbers} are $q$ for which there exists an $n$ where this sequence does not terminate at 1. In \cite{Pommerance}, Franco and Pomerance examine the distribution of Crandall numbers. As an intermediary, they first consider Wieferich numbers.

\begin{definition}
    A positive odd integer $q$ is \textit{Wieferich} if $\gcd\bigg(\frac{2^{\ell(q)}-1}{q},q\bigg) >1$, where $\ell(q)$ is the smallest positive integer such that $2^{\ell(q)} \equiv 1 \mod q$. 
\end{definition}

This definition comes from old work on Fermat's last theorem in which Wieferich showed that the first case of Fermat's last theorem is true for prime exponent $p$ as long as $p$ is not a Wieferich prime. The distribution of Wieferich primes has long been of interest, but it is still not known whether there are finitely or infinitely many Wieferich primes, with 1093 and 3511 being the only two known. 

It is also not known whether there are infinitely many non-Wieferich primes; however, Silverman \cite{Silverman} showed that $\#\{n\leq x : n \text{ is non-Wieferich}\}\gg \log(x)$ under the assumption of the $abc$-conjecture. It is conjectured that most primes are non-Wieferich. In fact, in \cite[Section 3]{search}, Crandall, Dilcher and Pomerance record their expectation that
$$W(x) := \#\{p \leq x : \text{ $p$ is a Wieferich prime}\} \approx \log_2(x)$$
\noindent where we use the notation $\log_k(x)$ to mean the $k$-th iterate of $\log(x)$.  
For the sake of this paper, we will only be assuming the much weaker statement that
\begin{equation}\label{eq:assume}
    W(x)  = O(x/\log^\alpha(x))
\end{equation}

\noindent for some $\alpha > 2$.

In \cite{Pommerance} it is shown that all Wieferich numbers are Crandall numbers, and that the Wieferich numbers have density 1 in the odd integers. In this paper, we will examine their complement, the non-Wieferich numbers for which $\gcd\bigg(\frac{2^{\ell(n)}-1}{n},n\bigg) =1$. The goal of this paper is to find an asymptotic formula for the count of non-Wieferich numbers $n\leq x$, as $x$ tends to infinity. We start with the following definition.

\begin{definition}  Let $N = \{n \in \mathbb{N}: n \text{ odd and non-Wieferich}\}$, and $N(x) = \# \{n \leq x: n \in N \}$.
    
\end{definition}

Any integer $n$ such that $\gcd(n,\varphi(n))=1$, where $\varphi(n)$ is Euler's totient function, is called  a \textit{cyclic number}. The name first comes from the result, first proved by \cite{german}, that $n$ is cyclic if and only if every group of order $n$ is cyclic.

\begin{definition} Let $C = \{n \in \mathbb{N} :n \text{ is cyclic}\}$, and $C(x) = \#\{ n \leq x : n \in C\}$.
\end{definition}

Erdős showed in \cite{erdosss} (see also the proof of this in \cite{Vaughan} on pages 387-389) that 
\begin{equation}\label{eq:cyclic}
C(x) \sim \frac{e^{-\gamma}x}{\log_3(x)}.
\end{equation}
\noindent where $\gamma \approx 0.577$ is the \textit{Euler-Mascheroni constant}. Furthering this in \cite{Pollack2}, Pollack obtained an asymptotic series expansion for $C(x)$ in descending powers of $\log_3(x)$.

\begin{definition}  We let $S$ be the symmetric difference between $C$ and $N$, i.e. $S= \{n \in N \Delta C \}$.
\end{definition}

We will prove, under the assumption of $\eqref{eq:assume}$, that $N$ and $C$ are essentially the same: each contains 100\% of the elements of the other, asymptotically. More precisely, we have the following.

\begin{theorem}\label{main} Assume \eqref{eq:assume}. Let $S(x) = \#\{n \leq x : n \in S\}$. Then
$$S(x) = O\bigg(\frac{x}{(\log_3(x))^2}\bigg)$$
\noindent as $x\rightarrow \infty.$

\end{theorem}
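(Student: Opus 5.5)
The plan is to split $S$ into $C\setminus N$ and $N\setminus C$ and bound each by $O(x/(\log_3 x)^2)$. Both parts rest on one local computation. For squarefree odd $n$ we have $\ell(n)=\operatorname{lcm}_{q\mid n}\ell(q)$. By the lifting-the-exponent lemma, for each $p\mid n$,
$$v_p\bigl(2^{\ell(n)}-1\bigr)=v_p\bigl(2^{\ell(p)}-1\bigr)+v_p\bigl(\ell(n)/\ell(p)\bigr).$$
Hence a squarefree $n$ is non-Wieferich exactly when (i) no prime $p\mid n$ is a Wieferich prime, and (ii) no pair of primes $p,q\mid n$ has $p\mid \ell(q)$. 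Cyclicity means $n$ is squarefree and no pair $p,q\mid n$ has $p\mid q-1$. Since $\ell(q)\mid q-1$, a cyclic $n$ automatically satisfies (ii). Therefore:

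\begin{itemize}
\item $C\setminus N$ consists of cyclic $n$ with a Wieferich prime factor.
\item $N\setminus C$ consists of non-Wieferich $n$ that either have $p^2\mid n$ for some $p$, or have primes $p,q\mid n$ with $p\mid q-1$ but $p\nmid \ell(q)$. The latter means $p\mid (q-1)/\ell(q)$, i.e.\ $q$ splits completely in $\mathbb{Q}(\zeta_p,2^{1/p})$.
\end{itemize}

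\textbf{Bounding $C\setminus N$.} Fix a Wieferich prime $p$ and a parameter $y$, to be chosen of size a power of $\log_2 x$.

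\begin{itemize}
\item \emph{Small $p\le y$.} A cyclic multiple of $p$ has no prime factor $q\equiv 1 \pmod p$. A standard upper-bound sieve with Brun--Titchmarsh and Mertens in progressions gives
$$\#\{n\le x: p\mid n,\ q\nmid n\ \forall q\equiv 1\ (p)\}\ll \frac{x}{p}\exp\Bigl(-c\,\frac{\log_2 x}{p}\Bigr).$$
Summing over $p\le y$ this is negligible once $y\le \log_2 x/(4\log_4 x)$, say.
\item \emph{Large $p>y$.} Write $n=pm$ with $m\le x/p$ cyclic. By \eqref{eq:cyclic}, with a crude uniform upper bound for small $x/p$, there are $\ll x/(p\log_3 x)$ such $n$. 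By partial summation and \eqref{eq:assume},
$$\sum_{p>y,\ p\ \mathrm{Wieferich}}\frac1p\ll\int_y^\infty \frac{W(t)}{t^2}\,dt\ll (\log y)^{1-\alpha}\asymp(\log_3 x)^{1-\alpha}.$$
Multiplying by $x/\log_3 x$ gives $x(\log_3 x)^{-\alpha}\le x(\log_3 x)^{-2}$, since $\alpha>2$.
\end{itemize}

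This is where $\alpha>2$ is used: the extra factor $1/\log_3 x$ from cyclicity of the cofactor is essential.

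\textbf{Bounding $N\setminus C$.} I would handle three cases, with $y$ as above.

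\begin{itemize}
\item \emph{Square factors $p^2\mid n$ with $p>y$.} These contribute $\ll x/y$.
\item \emph{Any $p\mid n$ with $p\le y$, including $p^2\mid n$.} Non-Wieferich forces $n$ to avoid every prime $q$ with $p\mid \ell(q)$. These primes have relative density about $1/p$: they include the $q\equiv 1 \pmod p$ for which $2$ is not a $p$-th power residue. Sieving as above gives again $\ll (x/p)\exp(-c\log_2 x/p)$, negligible.
\item \emph{Primes $p>y$, $p\mid n$, and $q\mid n$ with $p\mid(q-1)/\ell(q)$.} These are bounded by
$$x\sum_{p>y}\frac1p\sum_{q\le x:\ p\mid (q-1)/\ell(q)}\frac1q.$$
I want the inner sum to be $\ll \log_2 x/p^2$, which would give a total of $\ll x\log_2 x/y^2$.
\end{itemize}

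For the last case the two constraints on $y$ are compatible. Taking $y=(\log_2 x)^{2/3}$ makes $x/y$ and $x\log_2 x/y^2$ both far smaller than $x/(\log_3 x)^2$, and it also lies in the range needed for the small-$p$ sieve.

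\textbf{Main obstacle.} I expect the hardest step to be the uniform bound $\sum_{q\le x,\ p\mid (q-1)/\ell(q)}1/q\ll \log_2 x/p^2$ for $p$ up to a power of $\log_2 x$ and beyond. Chebotarev in $\mathbb{Q}(\zeta_p,2^{1/p})$ gives the right density $1/(p(p-1))$, but not uniformly. My first attempt would be the following split:

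\begin{itemize}
\item \emph{$q\le \exp(p^{A})$.} These primes divide $2^{(q-1)/p}-1$, and I would count them by a Brun--Titchmarsh argument combined with the trivial count of prime divisors of $2^m-1$.
\item \emph{$q>\exp(p^{A})$.} Here I would use the effective (GRH-free, Lagarias--Odlyzko-type) Chebotarev range, falling back to the cruder bound $\sum_{q\equiv 1\ (p)}1/q\ll \log_2 x/p$ when that range fails.
\end{itemize}

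The fallback only gives $x\log_2 x/y$ in total, so if the uniform bound cannot be secured I would have to push $y$ larger. That in turn would require sharpening the small-$p$ sieve, and the tension between these two ranges is the delicate point.
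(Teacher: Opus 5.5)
Your treatment of $C\setminus N$ is correct, and it takes a different route from the paper. You get the saving $1/\log_3 x$ from the fact that the cofactor $n/p$ is itself cyclic, via \eqref{eq:cyclic}, and you handle small Wieferich $p$ by sifting out the primes $q\equiv 1\pmod p$. The paper instead uses that the cofactor is $z$-rough, and quotes Montgomery--Vaughan for $n$ with a prime factor $\le z$. The problems are in $N\setminus C$. First, Case B is false when $p^2\mid n$. If $p^k\parallel n$ with $k\ge 2$ and $p$ non-Wieferich, then $\ell(p^k)$ already puts $p^{k-1}$ into $\ell(n)$. So a prime $q\mid n$ with $p\mid\ell(q)$ but $p^k\nmid\ell(q)$ does not make $n$ Wieferich. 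For example, $63=3^2\cdot 7$ has $\ell(63)=6$ and $2^6-1=63$, so $63$ is non-Wieferich even though $3\mid\ell(7)=3$. What non-Wieferich does force is avoidance of the primes $q$ with $p^k\mid\ell(q)$. These have density only $\asymp 1/p^k$, so the sieve saves nothing once $p^k\gg\log_2 x$. You must first discard the $n$ whose squarefull part is large, as in Lemma~\ref{notsquarefree}. This is repairable, but your bound $(x/p)\exp(-c\log_2 x/p)$ is unjustified for these $n$.

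Second, and more seriously, Case C rests on the uniform bound $\sum_{q\le x,\ p\mid (q-1)/\ell(q)}1/q\ll \log_2 x/p^2$. You do not prove it; you only sketch a Chebotarev strategy. Your fallback $x\log_2 x/y$ exceeds $x$ when $y=(\log_2 x)^{2/3}$, so as written Case C is not established. The missing idea is that no Kummer-field input is needed. In Case C every prime factor of $n$ exceeds $y$. After discarding $n=pq$, which number $O(x\log_2 x/\log x)$, write $n=pqk$ with $k\ge y$ and $k$ $y$-rough. Brun's sieve then gives $\ll x/(pq\log y)$ choices of $k$. This extra factor $1/\log y$, combined with plain Brun--Titchmarsh $\sum_{q\le x,\ q\equiv 1 (p)}1/q\ll \log_2 x/p$, gives $\ll x\log_2 x/(y(\log y)^2)$. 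Only $p\mid q-1$ is used here. The bound is $O(x/(\log_3 x)^2)$ exactly when $y=z=\log_2 x$. At that height, however, your Case B sum $\sum_{p\le z}(x/p)e^{-c\log_2 x/p}$ is $\asymp x/\log_3 x$, because of the primes $p\asymp z$. The paper resolves this ``tension'' by taking $p$ to be the least prime factor of $n$ and sifting the cofactor also by all primes below $p$. That gains a factor $\prod_{r<p}(1-1/r)\asymp 1/\log p$, and a dyadic decomposition then gives $\sum_{p\le z}\frac{x}{p\log p}e^{-cz/p}\ll x/(\log_3 x)^2$.
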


The following is an immediate consequence of \eqref{eq:cyclic} and Theorem \ref{main}.

\begin{cor}\label{asyms} Assume \eqref{eq:assume}. For $N(x) = \#\{n \leq x : n \in N\}$,
$$N(x) \sim C(x) \sim \frac{e^{-\gamma}x}{\log_3(x)}$$
\noindent as $x \rightarrow \infty$.
\end{cor}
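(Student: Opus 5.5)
The plan is to split $S$ into $C\setminus N$ and $N\setminus C$ and bound each part by $O(x/(\log_3 x)^2)$. The basic tool is the lifting-the-exponent lemma. For an odd prime $p$, write $w_p=v_p(2^{\ell(p)}-1)\ge 1$, so that $p$ is a Wieferich prime exactly when $w_p\ge 2$. Then for every $m$ with $\ell(p)\mid m$,
$$v_p(2^m-1)=w_p+v_p(m).$$
Since $\ell(n)=\operatorname{lcm}_{p^e\| n}\ell(p^e)$, this gives a simple criterion: $n$ is non-Wieferich iff $w_p+v_p(\ell(n))=v_p(n)$ for every $p\mid n$. (Even $n$ lie in neither set apart from $n=2\in C$, so they cost $O(1)$.)

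\emph{Step 1: $C\setminus N$.} Let $n$ be cyclic, so $n$ is squarefree and $p\nmid q-1$ for all primes $p,q\mid n$. Then $\ell(n)\mid\operatorname{lcm}_{q\mid n}(q-1)$ is coprime to $n$, so the criterion says $n$ is Wieferich iff some $p\mid n$ has $w_p\ge2$, i.e.\ iff $n$ is divisible by a Wieferich prime. Moreover $n=pm$ with $m\le x/p$ having no prime factor $q\equiv1\pmod p$. Fix the cutoff $y=(\log_2 x)^{1/2}$ and treat two ranges of $p$.
\begin{itemize}
\item For $p\le y$, use an upper-bound sieve (Brun's sieve, or Selberg with Brun--Titchmarsh). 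It bounds the number of such $m$ by $\ll (x/p)(\log x)^{-1/(p-1)}$. This is at most $(x/p)\exp(-\sqrt{\log_2 x})$, which is negligible after summing over $p\le y$.
\item For $p>y$, bound the number of $m$ by the cyclic-type count $\ll x/(p\log_3 x)$, uniformly in $p$. This should follow from Erdős' argument for \eqref{eq:cyclic}, since $m$ is itself essentially cyclic. Partial summation with \eqref{eq:assume} gives $\sum_{p>y,\ p \text{ Wieferich}}1/p\ll(\log y)^{1-\alpha}\asymp(\log_3 x)^{1-\alpha}$. The total is then $\ll x(\log_3 x)^{-\alpha}$, which is within the target because $\alpha>2$.
\end{itemize}

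\emph{Step 2: $N\setminus C$.} Let $n\in N$ be non-cyclic. The criterion forces a structure. If $p\| n$, then $v_p(\ell(n))=1-w_p\le 0$, so $p\nmid\ell(q)$ for every prime $q\mid n$. Hence if $n$ is not cyclic, one of two things happens.
\begin{itemize}
\item[(a)] $p^2\mid n$ for some $p$.
\item[(b)] $p\| n$ and there is $q\mid n$ with $q\equiv1\pmod p$ but $p\nmid\ell(q)$, i.e.\ $2$ is a $p$-th power residue mod $q$.
\end{itemize}
In both cases $n$ must also avoid every prime $q'\equiv 1\pmod p$ with $p\mid\ell(q')$, apart from at most a bounded set forced by the exponent condition. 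By Chebotarev in $\mathbb{Q}(\zeta_p,2^{1/p})$, those $q'$ have relative density $\frac{1}{p-1}(1-\frac1p)$, so sieving them out saves a factor $(\log x)^{-c/p}$.

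For case (b), the $q$ in question have density $1/(p(p-1))$, giving $\sum_{q\le x}^{*}1/q\approx \log_2 x/p^2$. The count is therefore
$$\ll x\sum_p\frac{\log_2 x}{p^3}(\log x)^{-c/p}.$$
For $p\le \log_2 x/(A\log_4 x)$ the sieve factor kills everything. For larger $p$, the tail $\log_2 x\sum_{p>P}p^{-3}\ll \log_2 x/P^2$ is tiny. Case (a) is handled the same way: the tail over large $p$ is trivial via $\sum_{p>P} 1/p^2$, and small $p$ are absorbed by the same sieve saving. Both contributions are $O(x/(\log_3 x)^2)$, indeed much smaller.

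\emph{Main obstacle.} The hard part is making the sieve inputs uniform in $p$. Step 1 needs a Brun--Titchmarsh-type upper bound for primes $\equiv1\pmod p$. Step 2 needs lower bounds for the reciprocal sum of primes $q\equiv1\pmod p$ with $p\mid\ell(q)$, uniformly for $p$ up to a power of $\log_2 x$, i.e.\ an effective Chebotarev theorem in the Kummer fields $\mathbb{Q}(\zeta_p,2^{1/p})$. I would first try to avoid full Chebotarev by working only with $p\le(\log_2 x)^{1/2}$. In that range an unconditional effective Chebotarev (Lagarias--Odlyzko) with error terms of size $\exp(-c\sqrt{\log x/[K:\mathbb{Q}]})$ suffices for $\sum_{q\le x}1/q$. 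Larger $p$ would be handled by the crude tail bounds above. If that cutoff is too small for case (b), I would instead sieve only by the primes $q\equiv1\pmod p$ up to $x^{1/\log_2 x}$, where the effective bounds are strongest.
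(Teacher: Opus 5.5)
At the top level you argue as the paper does: bound $C\setminus N$ and $N\setminus C$, then invoke \eqref{eq:cyclic}. Your $p$-adic criterion and the reduction of $C\setminus N$ to cyclic $n$ with a Wieferich prime factor are sound. Step~1 is also sound, and it is a valid alternative to the paper's appeal to Montgomery--Vaughan. Step~2, however, does not go through as written.

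\emph{Case (a).} The claim that $n$ must avoid every $q'$ with $p\mid\ell(q')$ is false. If $p^k\parallel n$ with $k\ge2$, your own criterion only forces $v_p(\ell(q'))\le k-w_p$. When $w_p=1$ this allows any number of prime factors $q'$ with $p\parallel\ell(q')$. For example $63=3^2\cdot7$ has $\ell(63)=6$ and $(2^6-1)/63=1$, so it is non-Wieferich although $3\mid\ell(7)$; so is $819=3^2\cdot7\cdot13$. What you can actually sieve by is $\{r: p^k\mid\ell(r)\}$, which saves only $(\log x)^{-c/p^k}$. So you must split according to the size of $p^k$, not $p$, and use $\sum_{k\ge2,\ p^k>P}p^{-k}\ll P^{-1/2}$ for the tail, as in Lemma~\ref{notsquarefree}. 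This part is repairable.

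\emph{Case (b).} The real gap is case (b) for $p\asymp\log_2 x$, which is exactly the scale that produces $\log_3 x$. Your bound $x\sum_p\frac{\log_2 x}{p^3}(\log x)^{-c/p}$ needs two inputs, uniformly for such $p$:
\begin{itemize}
\item a sieve lower bound $\sum_{r\le x,\ p\mid\ell(r)}1/r\ge c\log_2 x/p-O(1)$;
\item an upper bound $\ll\log_2 x/p^2$ for $\sum 1/q$ over $q\equiv1\bmod p$ with $p\nmid\ell(q)$.
\end{itemize}
Securing these only for $p\le P=(\log_2 x)^{1/2}$ and using tails beyond does not work. Even granting the $1/p^2$ density, your ``tiny'' tail is $\log_2 x\sum_{p>P}p^{-3}\asymp\log_2 x/(P^2\log P)\asymp1/\log_3 x$. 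That gives only $O(x/\log_3 x)$, the size of $C(x)$ itself, so it does not yield $N(x)\sim C(x)$. With only Brun--Titchmarsh ($\ll\log_2 x/p$) the bound is worse. Truncating the sieving primes at $x^{1/\log_2 x}$ does not touch the upper-bound input at all.

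There are two ways to close this.
\begin{itemize}
\item \emph{Repairing your route.} The sieve input is exactly Lemma~\ref{paul}, which is unconditional and uniform in $d$. For the upper bound, subtract Lemma~\ref{paul} (with Moree's value $\beta(p)=p/(p^2-1)$) from Pomerance's uniform estimate $\sum_{q\le x,\ q\equiv1\bmod p}1/q=\frac{\log_2 x}{p-1}+O(\frac{\log p}{p})$. This leaves $\frac{\log_2 x}{p^2-1}+O(\frac{\log p}{p})$, which is enough to close your case (b), and in fact with room to spare.
\item \emph{The paper's route.} It avoids the power-residue density altogether and gets the second factor $1/\log_3 x$ from roughness. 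If $n$ has a prime factor $\le z=\log_2 x$, sieve with its smallest prime factor $q$. Then $n/q$ has no prime $<q$ and, by Lemma~\ref{wie}, no prime $r$ with $q\mid\ell(r)$. This gives $\sum_{q\le z}\frac{x}{q\log q}(\log x)^{-c/q}\ll x/(\log_3 x)^2$ without using non-cyclicity. Otherwise the $z$-rough cofactor contributes $1/\log z=1/\log_3 x$. Brun--Titchmarsh together with $\sum_{p>z}\log_2 x/p^2\ll\log_2 x/(z\log z)$ supplies the other factor.
\end{itemize}
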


A close cousin of Corollary \ref{asyms} is proved in \cite{Luca}. We let $\lambda(n)$ be the smallest positive integer such that $a^{\lambda(n)} \equiv 1 \mod n$ for all $a$. Then define an odd number $n$ as \textit{$\lambda$-Wieferich} whenever gcd$((2^{\lambda(n)}-1)/n,n)>1$, and let $U = \{ n \in \mathbb{N} : n \text{ is not $\lambda$-Wieferich}\}$. Then \cite[Theorem 10]{Luca} states that
\begin{equation}\label{eq:luca}
U(x) :=\#\{n\leq x : n \in U\} \ll \frac{x}{\log_3(x)}.
\end{equation}
As $\ell(n) \mid \lambda(n)$, for any $n \in U$ we see that
$$\gcd\bigg(\frac{(2^{\lambda(n)}-1)}{n},n\bigg) \mid \gcd\bigg(\frac{(2^{\ell(n)}-1)}{n},n\bigg) = 1,$$
\noindent so also $n\in N$. Therefore $U \subseteq N$. Thus, \eqref{eq:luca} agrees with Corollary \ref{asyms}. We improve on this result, under the assumption of \eqref{eq:assume}, by showing the following.

\begin{theorem}\label{improved} As $x \rightarrow \infty$,
    $$U(x)  = N(x) + O\bigg( \frac{x}{(\log_3(x))^2} \bigg),$$
which becomes
$$U(x) \sim \frac{e^{-\gamma}x}{\log_3(x)}$$
under the assumption of \eqref{eq:assume}.
\end{theorem}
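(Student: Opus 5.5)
Since $U\subseteq N$, it suffices to bound $\#\{n\le x: n\in N\setminus U\}$ by $O(x/(\log_3 x)^2)$. The second claim then follows from Corollary \ref{asyms}. The plan is to show that an odd $n\le x$ in $N\setminus U$ must have a rare arithmetic feature, and to count those features crudely.

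\textbf{Structure of $N\setminus U$.} Write $n=\prod p^{a_p}$. First, if $p^2\mid n$ then $p\mid \varphi(n)$. For a non-Wieferich $n$ one should have $v_p(2^{\ell(n)}-1)=a_p$ exactly, and for $\lambda$-Wieferich $n$ some prime $p\mid n$ has $v_p(2^{\lambda(n)}-1)>a_p$. Since $\lambda(n)=\ell(n)\cdot m$ with $m=\lambda(n)/\ell(n)$, the lifting-the-exponent lemma gives $v_p(2^{\lambda(n)}-1)=v_p(2^{\ell(n)}-1)+v_p(m)$. So $n\in N\setminus U$ forces some prime $p\mid n$ with $p\mid \lambda(n)/\ell(n)$. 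In particular $p\mid\lambda(n)$, so there is a prime $q\mid n$ with $p\mid q-1$, or $p^2\mid n$. Either way $p\mid\gcd(n,\varphi(n))$, so $n$ is not cyclic.

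\textbf{Counting.} Such $n$ lie in $S$ only if they are also in $N$. So I would not try to exploit the $\ell$ versus $\lambda$ distinction, but simply bound the number of $n\le x$ that are in $N$ but not in $C$, i.e.\ $N\setminus C\subseteq S$. Theorem \ref{main} already bounds this by $O(x/(\log_3 x)^2)$, but only under \eqref{eq:assume}. The first part of Theorem \ref{improved} is stated unconditionally, so I need to inspect which portion of the proof of Theorem \ref{main} uses \eqref{eq:assume}. My expectation is that \eqref{eq:assume} is needed only for $C\setminus N$: cyclic $n$ divisible by a Wieferich prime are Wieferich. The direction $N\setminus C$ should instead follow from standard sieve estimates. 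These are, first, that for $y=\log_2 x$, almost all $n$ ($\ll x/\log_3 x \cdot 1/\log_3 x$ exceptions) have $\gcd(n,\varphi(n))$ free of primes $p>y$, by summing $\sum_{p>y}\sum_{q\equiv1(p)} x/(pq)\ll x\log_2 x/y$. Second, small primes $p\le y$ dividing both $n$ and some $q-1$ with $q\mid n$ typically divide $\ell(n)$ to a high power, which makes $n$ Wieferich unless a rare coincidence occurs.

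\textbf{Direct check in the $\ell/\lambda$ setting.} I would verify directly that the exceptional set here is smaller. We need $p\mid n$ and $p\mid\lambda(n)/\ell(n)$ with $n$ non-Wieferich. For $n$ with $p\mid q-1$ for some $q\mid n$, being non-Wieferich forces $p\nmid \ell(n)$ beyond what the $p$-adic valuation allows. Typically $p\mid \mathrm{ord}_q(2)$, since $2$ is a $p$-th power residue mod $q$ only with probability $1/p$, and then $p\mid\ell(n)$, which makes $n$ Wieferich via LTE. So the count is bounded by
\[
\sum_{p}\sum_{\substack{q\equiv 1 (p)\\ 2^{(q-1)/p}\equiv 1 (q)}} \frac{x}{pq} + \sum_p \frac{x}{p^2}\cdot(\text{restricted}).
\]
The first sum is $\ll x\sum_p \frac{\log_2 x}{p^2}$ before restricting. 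To reach $O(x/(\log_3 x)^2)$, I would restrict to $p\le y$ and to $n$ having many prime factors $q\equiv1 \pmod p$. That is the \textbf{main obstacle}: it requires that, for typical $n$ and each small $p\mid n$, $n$ has some prime $q\equiv 1\pmod p$ with $p\mid\mathrm{ord}_q 2$, outside a set of size $O(x/(\log_3 x)^2)$. I would try to show this by Chebotarev/Kummer density ($2$ is not a $p$-th power residue for a proportion $1-1/(p-1)$ of $q\equiv1 \pmod p$), combined with Turán–Kubilius style counting of such $q$ dividing $n$, following Erdős's cyclic argument in \cite{Vaughan}.
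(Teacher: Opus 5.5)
Your structural step is correct and is exactly the paper's Lemma~\ref{diff}. Lifting the exponent gives $v_p(2^{\lambda(n)}-1)=v_p(2^{\ell(n)}-1)+v_p(\lambda(n)/\ell(n))$, so $n\in N\setminus U$ forces a prime $p\mid n$ with $p\mid\lambda(n)$, hence $p\mid\gcd(n,\varphi(n))$. This is cleaner than the paper's route through Lemma~\ref{baby} and needs no squarefreeness. Your guess that \eqref{eq:assume} enters only through $C\setminus N$ is also right.

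The gap is that the substance of the theorem is the unconditional bound $(N\setminus C)(x)\ll x/(\log_3 x)^2$, and you do not prove it. The paper proves it as Lemma~\ref{nc}, with Lemma~\ref{notsquarefree} handling non-squarefree $n$; your sketch never treats that case, although your structural step explicitly allows $p^2\mid n$. The plan you give fails. You split by the size of the prime $p\mid\gcd(n,\varphi(n))$, and in both halves the set you propose to count has size of order $x/\log_3 x$, not $x/(\log_3 x)^2$.

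For $p>y=\log_2 x$, one has $\sum_{p>y}\sum_{q\le x,\ q\equiv 1 \bmod p} x/(pq)\asymp x\log_2 x/(y\log y)=x/\log_3 x$; your bound $x\log_2 x/y$ is just $x$. The number of $n\le x$ with a prime $p>\log_2 x$ dividing $\gcd(n,\varphi(n))$ genuinely is of that order.

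For $p\le y$, the statement you isolate as the main obstacle is false. For $p\asymp\log_2 x$, Lemma~\ref{paul} gives $\sum_{r\le x,\ p\mid \ell(r)}1/r\asymp 1$. So a positive proportion of the multiples of $p$ have no prime factor $r$ with $p\mid\ell(r)$. Summing $x/p$ over $p\in(\tfrac12\log_2 x,\log_2 x]$ already gives $\gg x/\log_3 x$ exceptions. Since this is a genuine lower bound, no Tur\'an--Kubilius or Kummer-density refinement can remove it.

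The missing idea is to split instead by the least prime factor of $n$, with $z=\log_2 x$.

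If $n$ is squarefree and its least prime factor $q$ is at most $z$, write $n=qm$. Then $m$ has no prime factor below $q$, and by Lemma~\ref{wie} no prime factor $r$ with $q\mid\ell(r)$. Brun's sieve and Lemma~\ref{paul} give a count $\ll \frac{x}{q\log q}(\log x)^{-c/q}=\frac{x}{q\log q}e^{-cz/q}$. The factor $1/\log q$, coming from $\prod_{r<q}(1-1/r)$, is exactly the $1/\log_3 x$ you are missing. The primes $q\in(z/2^{j+1},z/2^j]$ with $2^j\le\sqrt z$ contribute $\ll e^{-c2^j}x/(\log_3 x)^2$, and the remaining $q\le\sqrt z$ contribute $\ll xe^{-c\sqrt z}$. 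In total this gives $O(x/(\log_3 x)^2)$, without using non-cyclicity at all.

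If instead every prime factor of $n$ exceeds $z$, write $n=pp'k$ with $p'\equiv 1\pmod p$. Now $k$ is free of primes $\le z$, and the sieve factor $1/\log z$ on $k$ supplies the second $1/\log_3 x$ in your large-prime computation.

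With these two estimates plus the non-squarefree bound, your reduction finishes the proof exactly as the paper does: $N(x)-(N\setminus C)(x)\le U(x)\le N(x)$.
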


In fact, we get the upper bound from these asymptotics unconditionally, that is,
\begin{equation*}
  U(x) \leq N(x) \leq \frac{e^{-\gamma}x}{\log_3(x)} + O\bigg( \frac{x}{(\log_3(x))^2}\bigg)
\end{equation*}
\noindent (see \eqref{eq:Nupper} and \eqref{eq:Uupper}).
\\

\noindent \textbf{Acknowledgments.} I thank my advisor Paul Pollack for the many discussions and conversations regarding this paper.

\section{Proofs}

We start with a sufficient condition for a number to be non-Wieferich.

\begin{lemma}\label{nonwieferich condition} Let $n$ be odd and squarefree. Then $n$ is non-Wieferich if all primes dividing $n$ are non-Wieferich and for any two primes $p,q \mid n$ we have that $p \not\equiv 1 \mod q$.
    
\end{lemma}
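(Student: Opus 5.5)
We argue by contradiction: assume $n=p_1\cdots p_k$ (distinct odd primes) is Wieferich, so some prime $p\mid n$ divides both $n$ and $(2^{\ell(n)}-1)/n$. The second condition means $p^2 \mid 2^{\ell(n)}-1$, because $n$ is squarefree and so $p$ divides $n$ exactly once. The aim is then to show $p\nmid \ell(n)$, and to use this to push the $p^2$-divisibility down to exponent $\ell(p)$, contradicting that $p$ is non-Wieferich.

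First note $\ell(n)=\operatorname{lcm}_{q\mid n}\ell(q)$, since $2^m\equiv 1 \bmod n$ iff $2^m\equiv1 \bmod q$ for each $q\mid n$ (again using squarefreeness and the Chinese remainder theorem). Each $\ell(q)$ divides $q-1$. We claim $p\nmid \ell(q)$ for every prime $q\mid n$. If $q=p$, then $\ell(p)\mid p-1$ is coprime to $p$. If $q\neq p$, then $p\mid \ell(q)$ would give $p\mid q-1$, i.e.\ $q\equiv 1 \bmod p$, which the hypothesis forbids. Hence $p\nmid \ell(n)$.

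Now write $\ell(n)=\ell(p)\,m$. Since $p\nmid \ell(n)$, we have $p\nmid m$. Put $a=2^{\ell(p)}$, so $a\equiv1 \bmod p$. Then
\[
\frac{2^{\ell(n)}-1}{2^{\ell(p)}-1}=1+a+\cdots+a^{m-1}\equiv m\not\equiv 0 \pmod p .
\]
So $v_p(2^{\ell(n)}-1)=v_p(2^{\ell(p)}-1)$; this is the lifting-the-exponent step in its simplest case. Because $p$ is non-Wieferich as a prime, $p^2\nmid 2^{\ell(p)}-1$, and therefore $p^2\nmid 2^{\ell(n)}-1$. This contradicts the first paragraph, so $n$ is non-Wieferich.

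The only delicate point is the middle step. The congruence condition is needed exactly to rule out $p\mid \ell(q)$ for the other primes $q$ dividing $n$, and squarefreeness is what turns $p\mid (2^{\ell(n)}-1)/n$ into $p^2\mid 2^{\ell(n)}-1$. Otherwise the argument is routine.
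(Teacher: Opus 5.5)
Your proof is correct and is essentially the paper's argument. The paper passes to the multiple $\ell(p)\varphi(n/p)$ of $\ell(n)$ and expands $(1+jp)^{\varphi(n/p)}$ modulo $p^2$, while you factor $\ell(n)=\ell(p)m$ directly and use the geometric series; in both, the congruence hypothesis serves exactly to make the exponent multiplier prime to $p$. One small misattribution: $p\mid n$ and $p\mid (2^{\ell(n)}-1)/n$ give $p^2\mid 2^{\ell(n)}-1$ for any $n$, so squarefreeness is not what you need there; you actually use it in $\ell(n)=\operatorname{lcm}_{q\mid n}\ell(q)$ over the primes $q\mid n$.
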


\begin{proof}

Assume the stated conditions on $n$, let $p\mid n$ and write $P = n/p$, where $P$ is a product of distinct primes. It suffices to show that $(p, (2^{\ell(n)}-1)/n) =1$. As $p \parallel n$, this is equivalent to showing that $p \parallel 2^{\ell(n)}-1$. We see that $\ell(n) \mid \ell(p)\ell(P) \mid \ell(p) \varphi(P)$, and so $2^{\ell(n)} -1 \mid 2^{\ell(p)\varphi(P)}-1$. As $p$ is non-Wieferich, we have that $p \parallel 2^{\ell(p)}-1$, so for some $j \not\equiv 0 \mod p$ we have that
$$2^{\ell(p)} \equiv 1 + jp \mod p^2.$$
\noindent By applying the binomial theorem,
$$2^{\ell(p)\varphi(P)} \equiv 1 + \varphi(P)jp \mod p^2.$$
\noindent But as $p \not\equiv 1 \mod q$ for any $q \mid n$ we know that $p \nmid \varphi(P)$. Thus, $p \parallel 2^{\ell(p)\varphi(P)}-1$, and so $p \parallel 2^{\ell(n)}-1$. \qedhere

\end{proof}

\begin{lemma}\label{baby} Odd, squarefree, non-Wieferich integers have no Wieferich prime factors.

\end{lemma}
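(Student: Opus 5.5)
We prove the contrapositive: if $n$ is odd and squarefree and some prime $p\mid n$ is Wieferich, then $n$ is Wieferich. By definition it suffices to show $p \mid \frac{2^{\ell(n)}-1}{n}$. Since $p \parallel n$, this is the same as showing $p^2 \mid 2^{\ell(n)}-1$.

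The first step is to compare orders. Because $p \mid n$, every power of $2$ congruent to $1 \bmod n$ is also congruent to $1 \bmod p$. Hence $\ell(p) \mid \ell(n)$, and so $2^{\ell(p)}-1 \mid 2^{\ell(n)}-1$.

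The second step uses the hypothesis on $p$. Apply the definition of Wieferich to the single prime $q=p$: we have $\gcd\bigl(\frac{2^{\ell(p)}-1}{p},p\bigr)>1$. Since $p$ is prime, this means $p \mid \frac{2^{\ell(p)}-1}{p}$, that is, $p^2 \mid 2^{\ell(p)}-1$.

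Combining the two steps gives $p^2 \mid 2^{\ell(n)}-1$. Together with $p \parallel n$ (here squarefreeness is used), this yields $p \mid \frac{2^{\ell(n)}-1}{n}$. Hence $\gcd\bigl(\frac{2^{\ell(n)}-1}{n},n\bigr)\ge p>1$, so $n$ is Wieferich, which is the desired contradiction.

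No step here is a real obstacle. The only point needing care is the bookkeeping of $p$-adic valuations: we need $v_p(n)=1$ so that the quotient $\frac{2^{\ell(n)}-1}{n}$ still retains a factor of $p$. This is exactly where squarefreeness enters.
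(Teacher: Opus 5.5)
Your proof is correct and follows essentially the same route as the paper. In both, $\ell(p)\mid\ell(n)$ gives $p^2 \mid 2^{\ell(p)}-1 \mid 2^{\ell(n)}-1$, and then $p\parallel n$ (squarefreeness) leaves a factor of $p$ in $\frac{2^{\ell(n)}-1}{n}$; the paper writes this quotient as $\frac{(2^{\ell(p)}-1)m}{pk}$ with $p\nmid k$.
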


\begin{proof} Assume that $n$ is squarefree such that a Wieferich prime $p$ divides $n$. As $\gcd\bigg(\frac{2^{\ell(p)}-1}{p}, p \bigg) \neq 1$, we have $p^2|2^{\ell(p)}-1$. Writing $n=kp$, where $k$ is coprime to $p$, we know
$$\frac{2^{\ell(n)}-1}{n} = \frac{(2^{\ell(p)}-1)m}{pk}$$
\noindent for some $m$. As $p^2$ divides the numerator, and $p \nmid k$, we see that $p\mid \frac{2^{\ell(n)}-1}{n} $. Thus $n$ is Wieferich.
    
\end{proof}
\begin{lemma}[Pollack]\label{paul} For each $x \geq 3$ and each positive integer $d$,
$$\sum_{\substack{\operatorname{prime }q \leq x \\ d \mid \ell(q)}}\frac{1}{q} = \beta(d) \log_2(x) + O\bigg( \frac{\log(3d)}{\varphi(d)} \bigg)$$
\noindent where $\beta(d) \asymp 1/d$. Here all implied constants are absolute.
    
\end{lemma}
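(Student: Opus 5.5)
The plan is to use Chebotarev density in the Kummer extensions $\mathbb{Q}(\zeta_m, 2^{1/k})$ together with partial summation. We write the condition $d \mid \ell(q)$ through divisors. For a prime $q \nmid 2d$, $d \mid \ell(q)$ exactly when $2$ is not a $(d/r)$-th power residue in the appropriate sense for each prime $r \mid d$. More concretely, write $d=\prod r^{a_r}$. Then $r^{a}\mid \ell(q)$ iff there is some $j\ge a$ with $q\equiv 1 \bmod r^{j}$ and $2$ not an $r^{j-a+1}$-th power modulo $q$, where $r^j \parallel q-1$. Inclusion–exclusion over these conditions expresses the indicator of $d\mid\ell(q)$ as a finite signed combination of splitting conditions, each of the form ``$q$ splits completely in $K_{m,k}=\mathbb{Q}(\zeta_m,2^{1/k})$ with $k\mid m$''. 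The natural density of such $q$ is $1/[K_{m,k}:\mathbb{Q}]$. Summing these densities gives a constant $\beta(d)$, the density of primes $q$ with $d\mid \ell(q)$, and this is the known formula of Wiertelak and Pappalardi.

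Next come the two-sided bounds $\beta(d)\asymp 1/d$. For the lower bound, the primes $q\equiv 1+d \bmod d^2$-type classes (for $d$ odd, $q\equiv 1 \bmod d$ with $2$ a non-residue at each $r\mid d$) already contribute $\gg 1/\varphi(d)\cdot\prod_{r\mid d}(1-1/r)\gg 1/d$. For the upper bound, $d\mid\ell(q)$ forces $d\mid q-1$, and $\sum_{j}$ over extra powers decays geometrically, giving $\beta(d)\ll 1/d$. The degree formula $[K_{m,k}:\mathbb{Q}]=\varphi(m)k/\epsilon$ with $\epsilon\in\{1,2\}$ (the correction from $\sqrt2\in\mathbb{Q}(\zeta_8)$) is what makes these constants absolute.

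The remaining step, and the main obstacle, is the error term with uniformity in $d$. We need
\[\sum_{q\le x,\ d\mid \ell(q)} \frac1q=\beta(d)\log_2 x+O\Bigl(\frac{\log 3d}{\varphi(d)}\Bigr).\]
First we truncate. Primes $q\le y:=\exp(d^{c})$ or similar are handled trivially. We have $\sum_{q\le t,\, q\equiv1 (d)}1/q\ll \log_2 t/\varphi(d)+\log(3d)/\varphi(d)$ by Brun–Titchmarsh and partial summation. This is the source of the $\log(3d)/\varphi(d)$, since for $q\le d^{O(1)}$ the sum over $q\equiv 1 \bmod d$ of $1/q$ is $\ll \log(3d)/\varphi(d)$ via Brun–Titchmarsh. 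For large $q$ we use the unconditional effective Chebotarev (Lagarias–Odlyzko) in $K_{m,k}$ only for small $m$. For the tail of the inclusion–exclusion we instead use that the conditions with $r^j$ large are dominated by $q\equiv 1 \bmod dr^{j}$, whose reciprocal sums are $\ll \log_2 x/\varphi(dr^j)+\log(3dr^j)/\varphi(dr^j)$, summing to an acceptable error. Since the main term grows only like $\log_2 x$, any Chebotarev error of size $\ll 1/\log q$ relative to the density, integrated against $dt/(t\log t)$, contributes $O(1/\varphi(d))$. Carefully balancing the effective range of Chebotarev (which needs $\log q\gg$ a power of the discriminant size $\approx (mk)^{O(1)}$) against the Brun–Titchmarsh tail is where most of the work lies. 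If uniformity fails we would fall back on citing Pollack's original proof, as the lemma's attribution suggests.
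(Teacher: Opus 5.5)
The paper does not prove this lemma: it quotes Pollack's Lemma 6.4 and takes $\beta(d)$ from Moree's Theorem 2, which is exactly your fallback. Your self-contained sketch has a genuine gap at the one point that matters, uniformity in $x$. Two things in it are right. The ingredients are the right ones, and the cutoff $y=\exp(d^{c})$ is the correct source of the error term: Brun--Titchmarsh gives $\sum_{q\le y,\ q\equiv 1 \bmod d}1/q\ll c\log(3d)/\varphi(d)$, and $\beta(d)\log_2 y\ll c\log(3d)/d$. But the step you describe as ``summing to an acceptable error'' fails. The indicator of $d\mid\ell(q)$ is not a finite combination of splitting conditions, because it depends on $j=v_r(q-1)$ for every $j\ge a_r$. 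If you keep $j$ below a fixed level and discard the $q$ with $q\equiv 1\bmod dr^{J}$, the discarded part is bounded only by $\log_2 x/\varphi(dr^{J})=\log_2 x/(\varphi(d)r^{J})$, which grows with $x$. Making it $O(1/\varphi(d))$ forces $r^{J}\gg\log_2 x$. You then need Chebotarev in fields $\mathbb{Q}(\zeta_m,2^{1/k})$ whose degree grows like a power of $\log_2 x$, and the Lagarias--Odlyzko range $\log q\ge 10\,n_K(\log d_K)^{2}$ excludes the primes just above $\exp(d^{c})$ once $x$ is large. Raising the Brun--Titchmarsh cutoff to cover those primes instead costs $\asymp\log_3 x/\varphi(d)$. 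So one fixed cutoff plus one fixed truncation cannot give an $x$-independent error.

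The missing idea is to let the truncation depend on the size of $q$ rather than on $x$.
\begin{itemize}
\item Split $q>\exp(d^{C})$ into ranges $T<q\le T^{2}$ with $T=\exp(2^{i})$. On each range keep only the $q$ with $(q-1,d^{\infty})/d\le E:=(\log T)^{\delta}$. Truncate the extra prime powers jointly, not prime by prime, since otherwise the degree explodes when $\omega(d)$ is large.
\item A discarded $q$ has $de\mid q-1$ for some $e\mid d^{\infty}$ with $E<e\le dE$. Brun--Titchmarsh on that single range then bounds both their contribution and the loss in density by $\ll\varphi(d)^{-1}\sum_{e\mid d^{\infty},\,e>E}e^{-1}\le\varphi(d)^{-1}E^{-1/2}\prod_{r\mid d}(1-r^{-1/2})^{-1}=\varphi(d)^{-1}E^{-1/2}d^{o(1)}$. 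This sums over $i$ to $O(1/\varphi(d))$.
\item The retained conditions live in fields $\mathbb{Q}(\zeta_m,2^{1/k})$ with $k\mid m$ and degree bounded by a fixed power of $dE$. So Lagarias--Odlyzko applies on the whole range once $\delta$ is small and $C$ is large.
\item The possible exceptional zero comes, by Stark, from a quadratic subfield of conductor $D\mid 8m$. It is negligible because $1-\beta_0\gg D^{-1/2}(\log D)^{-2}$ while $\log T\ge d^{C}$.
\end{itemize}

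Two smaller repairs are also needed. First, your argument for $\beta(d)\ll 1/d$ gives only $1/\varphi(d)$. You must use that when $v_r(q-1)=a_r$ the prime $2$ has to be an $r$-th power non-residue. This makes the local factor $\varphi(r^{a_r})^{-1}\bigl((1-\tfrac1r)^{2}+\tfrac1r\bigr)=r^{-a_r}\bigl(1+\tfrac{1}{r(r-1)}\bigr)$. Second, your lower-bound construction, stated for odd $d$, does not extend verbatim to $8\mid d$. There $2$ is automatically a square modulo every $q\equiv 1\bmod 8$, so you must take $v_2(q-1)=a_2+1$ with $2$ not a fourth power.
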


 This lemma appears as \cite[Lemma 6.4]{Paul}. The precise value of $\beta(d)$ is described in  \cite[Theorem 2]{Moree}.

We now show that the number of non-Wieferich odd $n \leq x$ which are not squarefree is small enough to allow us to focus on counting squarefree $n$ for the remainder of the paper.
 
\begin{lemma}\label{notsquarefree} Define $R(x) = \#\{ n \leq x:n \text{ odd, non-squarefree, and non-Wieferich} \}$. Then
\begin{equation}
R(x) \ll \frac{x}{\log_3(x)^{5/2}}.    
\end{equation}
 
\end{lemma}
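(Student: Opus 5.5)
The plan is to split according to a prime $p$ with $p^2 \mid n$. Large square divisors are rare, and a small one forces $n$ to avoid a whole class of primes, which a sieve shows is rare. Put $L = \log_3(x)$. Every $n$ counted by $R(x)$ has some odd prime $p$ with $p^a \parallel n$ and $a \ge 2$. If $n$ has such a prime power with $p^a > L^3$, then $n$ is divisible by $p^2$ with $p > L^{3/2}$, or by $p^a$ with $p \le L^{3/2}$ and $p^a > L^3$. The number of such $n \le x$ is
\[
\ll \sum_{p > L^{3/2}} \frac{x}{p^2} + \sum_{p \le L^{3/2}} \frac{x}{L^3} \ll \frac{x}{L^{3/2}} + \frac{x L^{3/2}}{L^3}.
\]
This only gives $x/L^{3/2}$, so I raise the threshold to $p^a > L^{4}$ and use $p > L^{2}$ in the first sum. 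That gives $\ll x/L^2 + xL^2/L^4$, which is still weak. I will take the threshold $T = L^{6}$: primes $p > L^3$ contribute $\ll x/L^3$, and the prime powers with $p \le L^3$, $p^a > L^6$ contribute $\ll \sum_{p \le L^3} x\, p^{-\max(2,\lceil \log T/\log p\rceil)} \ll x L^3/L^6$. Hence we may assume $p^a \parallel n$ with $2 \le a$ and $p^a \le L^6$.

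Next I extract the arithmetic constraint. Let $w = v_p(2^{\ell(p)}-1) \ge 1$. By lifting the exponent, $v_p(2^{k\ell(p)}-1) = w + v_p(k)$, and $\ell(p) \mid \ell(n)$. Hence $v_p(2^{\ell(n)}-1) = w + v_p(\ell(n)/\ell(p))$. Since $n$ is non-Wieferich, this must equal exactly $a$, so $w \le a$ and $v_p(\ell(n)) = a - w$ (note $p \nmid \ell(p)$). Now suppose $q \ne p$ is a prime dividing $n$ with $d := p^{a-w+1} \mid \ell(q)$. Then $d \mid \ell(q) \mid \ell(n)$, which is a contradiction. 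Thus $n = p^a m$, where $m \le x/p^a$ has no prime factor in $Q_d = \{q : d \mid \ell(q)\}$, and $d \le p^{a} \le L^6$. Note that this treats the case of a Wieferich prime $p$ with $a = w$ uniformly (there $d = p$).

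For the sieve step, fix $p^a$ and $d$, and set $z = x^{1/\log_2 x}$. A standard upper-bound sieve (Brun's, or just the fundamental lemma) applied to the integers $m \le x/p^a$ free of primes in $Q_d \cap [1,z]$ gives
\[
\ll \frac{x}{p^a}\prod_{\substack{q \le z\\ q\in Q_d}}\Bigl(1-\frac1q\Bigr) \ll \frac{x}{p^a}\exp\Bigl(-\sum_{\substack{q\le z\\ d\mid \ell(q)}}\frac1q\Bigr).
\]
By Lemma \ref{paul}, with $\log_2 z = \log_2 x - \log_3 x$, the exponent is $-\beta(d)\log_2 x + O(L) + O(1)$. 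Since $\beta(d) \gg 1/d \gg L^{-6}$, this is $\le -c\log_2 x/L^6 + O(L)$. The bound is therefore $\ll (x/p^a)\exp(-\sqrt{\log_2 x})$, say.

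Summing over the at most $L^6$ choices of $p^a \le L^6$ (and $d$ is determined by $p, a, w$) gives $\ll x L^6 \exp(-\sqrt{\log_2 x})$, which is far smaller than $x/L^{5/2}$. Together with the large prime-power contribution $\ll x/L^3$, this gives $R(x) \ll x/L^{5/2}$. The main obstacle is the careful valuation bookkeeping that yields $v_p(\ell(n)) = a-w$ exactly. That step needs lifting the exponent to hold for odd $p$, and it needs the observation that the non-Wieferich condition forbids excess powers of $p$ in $\ell(n)$ coming from other primes. The sieve itself is routine once Lemma \ref{paul} is available, since $\beta(d)\log_2 x$ is enormous for $d$ polylogarithmic in $L$.
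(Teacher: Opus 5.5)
Your proof is correct and follows essentially the same route as the paper. Both arguments discard the $n$ whose square part is too large, then fix a small $p^a \parallel n$ with $a \ge 2$ and use the non-Wieferich condition to forbid prime factors $q$ of $n$ for which a fixed power of $p$ divides $\ell(q)$. Both then finish with Brun's sieve and Lemma \ref{paul}.

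The differences are minor. You cut off a single prime power at $(\log_3 x)^6$ instead of cutting off the square-full part at $(\log_3 x)^5$. Your lifting-the-exponent bookkeeping gives the slightly sharper modulus $p^{a-w+1}$ in place of the paper's $p^a$. You also truncate the sieve at $z = x^{1/\log_2 x}$. None of these changes the structure of the argument.
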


\begin{proof}

Note that $n$ is Wieferich if it can be written in the following form: $n=p^krm$, where $k>2$, $p$ and $r$ are prime, $p^k \parallel n$, and $p^k \mid \ell(r)$. This is because $\ell(p^{k+1}) \mid p^k\ell(p)\mid \ell(n)$, and so $p^{k+1} \mid 2^{\ell(n)}-1$, giving $p \mid \gcd\bigg(n, \frac{2^{\ell(n)}-1}{n}\bigg)$. Writing the largest squarefree divisor of $n$ as $s(n)$, we define
$$R_{p,k}(x) := \#\{ n \leq x:n \text{ odd, non-Wieferich}, s(n) \leq (\log_3(x))^5, \text{ and }  p^k\parallel n\}.$$
\noindent Since the number of square-full numbers up to $x$ is $O(\sqrt{x})$, one has that
$$\#\{n \leq x : s(n) > y\} \leq \sum_{\substack{m>y \\ m \text{ squarefree}}} \frac{x}{m} \ll \frac{x}{\sqrt{y}}.$$
\noindent Then, as $n$ is odd, whenever $p^k \mid n$ we have $ k \leq \log(n)/\log(p) < \log(n)$. Hence,
$$R(x) \leq \sum_{k=2}^{\lfloor\log(x)\rfloor} \sum_{p \text{ prime}} R_{p,k}(x) + O\bigg(\frac{x}{(\log_3(x))^{5/2}} \bigg).$$

For each $k$, an $n$ counted by $R_{p,k}(x)$ can be written as $n=p^km$ where $m \leq x/p^k$ and no prime $r \mid m$ has $p^k \mid \ell(r)$. Define $P_{p,k} = \{r \leq \frac{x}{p^k}: r \text{ prime, and } p^k  \mid \ell(r) \}$, and $\mathcal{P}_{p,k} = \prod_{r\in P_{p,k}}r$. Hence, by the Brun's sieve, 
\begin{align*}
R_{p,k}(x) & \leq \#\{ m \leq \frac{x}{p^k} : \gcd(m,\mathcal{P}_{p,k})=1 \} \\
 &\ll \frac{x}{p^k}\prod_{r \in P_{p,k}}\bigg(1-\frac{1}{r}\bigg) \\
  & \ll \frac{x}{p^k} \exp \bigg( \sum_{\substack{r \in P_{p,k} \\p^k \mid \ell(r)}} \frac{-1}{r} \bigg) \\
 &\ll \frac{x}{p^k}\log\bigg(\frac{x}{p^k}\bigg)^{-c/p^k}, \\
\end{align*}
\noindent where we use Lemma \ref{paul} to estimate the product on $r$, and $c$ is a positive absolute constant. Thus, summing over $p$ and $k$, we have 
\begin{align*}
\sum_{k=2}^{\lfloor\log(x)\rfloor} \sum_{p \text{ prime}} R_{p,k}(x) & \ll  x \sum_{k=2}^{\lfloor\log(x)\rfloor}\sum_{p \text{ prime}} \frac{1}{p^k}(\log(x))^{-c/p^k} \\ & \ll x \sum_{k=2}^{\lfloor\log(x)\rfloor}\sum_{p \text{ prime}}\frac{1}{p^k}\exp\bigg(\frac{-c\log_2(x)}{(\log_3(x))^5}\bigg) \\
& \ll x\exp\bigg(\frac{-c\log_2(x)}{(\log_3(x))^5}\bigg) \ll \frac{x}{\log_2(x)} .
\end{align*}
This is certainly $O\bigg(\frac{x}{(\log_3(x))^{5/2}}\bigg)$, and so we get our result. \qedhere

\end{proof}

We shall consider the two pieces of the symmetric difference, $S = (C\setminus N)\cup (N \setminus C)$, separately.

\begin{lemma}[Franco-Pomerance]\label{wie}  Let $n$ be an odd positive integer. If there exists a prime $p$ such that $p \mid  \mid n$ and $p \mid \ell(n)$, then $n$ is Wieferich.

\end{lemma}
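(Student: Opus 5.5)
We will show directly that $p$ divides $\gcd\bigl(n,(2^{\ell(n)}-1)/n\bigr)$. Since $p\mid n$ by hypothesis, the whole task is to prove $p^2\mid 2^{\ell(n)}-1$. Then, because $p\parallel n$, the quotient $(2^{\ell(n)}-1)/n$ still contains a factor of $p$.

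Write $n=pm$ with $\gcd(p,m)=1$. First we locate the factor $p$ inside $\ell(n)$. Since $\ell(n)=\operatorname{lcm}(\ell(p),\ell(m))$ and $\ell(p)\mid p-1$ is prime to $p$, the hypothesis $p\mid\ell(n)$ forces $p\mid \ell(m)$. We also have $\ell(p)\mid\ell(n)$, and $\gcd(\ell(p),p)=1$, so $p\,\ell(p)\mid \ell(n)$.

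Next comes the lifting step. Write $2^{\ell(p)}=1+jp$ for some integer $j$. Raising to the $p$-th power and expanding binomially gives
\[
2^{p\ell(p)}=(1+jp)^p\equiv 1+jp^2\equiv 1 \pmod{p^2}.
\]
Hence $p^2\mid 2^{p\ell(p)}-1$. Since $p\ell(p)\mid\ell(n)$, it follows that $2^{p\ell(p)}-1\mid 2^{\ell(n)}-1$, and therefore $p^2\mid 2^{\ell(n)}-1$. (Equivalently, one may note $\ell(p^2)\mid p\,\ell(p)$.)

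Finally, $v_p(2^{\ell(n)}-1)\ge 2$ while $v_p(n)=1$, so $p$ divides $(2^{\ell(n)}-1)/n$. This number is an integer because $n\mid 2^{\ell(n)}-1$, and $p$ also divides $n$. Thus the gcd exceeds $1$, and $n$ is Wieferich.

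The only step needing any care is the observation that $p\mid\ell(n)$ together with $\ell(p)\mid\ell(n)$ gives $p\,\ell(p)\mid\ell(n)$, which uses $\gcd(p,\ell(p))=1$. Everything else is routine.
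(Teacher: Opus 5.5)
Your proof is correct. It is the standard argument: $\gcd(p,\ell(p))=1$ gives $p\,\ell(p)\mid\ell(n)$, hence $\ell(p^2)\mid p\,\ell(p)\mid\ell(n)$ and $p^2\mid 2^{\ell(n)}-1$ while $p\parallel n$. The paper itself only cites Franco--Pomerance for this lemma, but it uses exactly this lifting step in the proof of Lemma~\ref{notsquarefree}, and your side remark that $p\mid\ell(m)$ is true but not needed.
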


For the proof of this, see the proof of Theorem 4 in \cite{Pommerance}.

\begin{lemma}\label{nc}  Let $(N \setminus C)(x) := \#\{ n \leq x : n \text{ is odd, non-Wieferich, and not cyclic} \}$. We have that
$$(N\setminus C)(x) = O\bigg( \frac{x}{(\log_3(x))^2} \bigg).$$  
\end{lemma}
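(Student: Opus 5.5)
By Lemma \ref{notsquarefree}, the odd, non-squarefree, non-Wieferich $n\le x$ contribute only $O(x/(\log_3 x)^{5/2})$. So it suffices to count odd squarefree non-Wieferich $n\le x$ that are not cyclic. For squarefree $n$, failing to be cyclic means there are primes $p,q\mid n$ with $q\equiv 1 \pmod p$. Since $n$ is non-Wieferich and $p\parallel n$, Lemma \ref{wie} gives $p\nmid \ell(n)$. Because $\ell(q)\mid \ell(n)$, this forces $p\nmid \ell(q)$. Hence every such $n$ has prime factors $p,q$ with
$$q\equiv 1 \pmod p, \qquad p\nmid \ell(q),$$
and in addition no prime $r\mid n$ has $p\mid \ell(r)$. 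That last condition, again by Lemma \ref{wie}, is the sieve condition I will exploit.

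I split according to the size of $p$, with $y=(\log_3 x)^2$ or a similar power.

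\emph{Case $p\le y$.} Write $n=pm$. No prime $r\mid m$ may satisfy $p\mid \ell(r)$. Exactly as in the proof of Lemma \ref{notsquarefree}, Brun's sieve with Lemma \ref{paul} gives
$$\#\{m\le x/p\} \ll \frac{x}{p}(\log x)^{-c/p}.$$
I expect the condition $q\equiv 1\pmod p$ to be unnecessary here. For $p\le y$ we have $(\log x)^{-c/p}\le \exp(-c\log_2 x/y)$, which is tiny, so summing over $p$ gives $O(x/\log_2 x)$.

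\emph{Case $p>y$.} Here I use the pair $(p,q)$ directly. The number of $n\le x$ divisible by $pq$ with $q\equiv 1\pmod p$ is at most
$$x\sum_{p>y}\frac1p\sum_{\substack{q\le x\\ q\equiv1 (p)}}\frac1q \ll x\sum_{p>y}\frac{\log_2 x}{p^2}\ll \frac{x\log_2 x}{y}.$$
This is far too large. So the crude union bound must be combined with the sieve: after fixing $p$ and $q$, the cofactor $m=n/(pq)$ must still avoid all primes $r$ with $p\mid\ell(r)$. The sieve factor $(\log x)^{-c/p}$ is only useful when $p$ is somewhat smaller than $\log_2 x$. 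Balancing therefore seems to need the threshold $y\approx \log_2 x/\log_4 x$-ish, with the large-$p$ range handled by the $\sum 1/p^2$ tail, which gives $\ll x\log_2 x/(y\log y)$. This still does not reach $x/(\log_3 x)^2$.

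The missing ingredient is that a random $n$ already has a chance of only about $1/\log_3 x$ of avoiding all small $p\mid \ell(r)$ conditions. This is the Erdős mechanism behind \eqref{eq:cyclic}. I would mimic that argument:
\begin{enumerate}
\item Restrict to $n$ having no prime $p\le \log_2 x$ that divides $\ell(r)$ for some $r\mid n$. This is a condition of density roughly $e^{-\gamma}/\log_3 x$ by Lemma \ref{paul}, just as in Erdős's proof, where $p\mid r-1$ is replaced by $p\mid\ell(r)$.
\item Among those $n$, the extra divisor pair with $p>\log_2 x$ and $q\equiv1\pmod p$ costs a factor $\sum_{p>\log_2 x}\log_2 x/p^2\ll 1/\log_3 x$.
\end{enumerate}
Multiplying the two factors gives $x/(\log_3 x)^2$.

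The main obstacle is making step 1 rigorous with the extra fixed primes $p,q$ removed. This requires a sieve upper bound of the form
$$\#\{m\le x/(pq): \gcd(m,\mathcal{P})=1\}\ll \frac{x}{pq\log_3 x},$$
with $\mathcal{P}$ the product of primes $r$ having some prime $\ell'\le\log_2 x$ with $\ell'\mid\ell(r)$. Bounding $\sum_{r\mid\mathcal{P}}1/r\ge \log_4 x+O(1)$ via Lemma \ref{paul} and inclusion–exclusion over $\ell'$, then applying Brun's sieve, should give this bound. Uniformity in $pq$ up to $x^{1/2}$ will need care; I would handle large $pq$ by a separate trivial bound.
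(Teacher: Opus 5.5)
Your reduction to squarefree $n$, your use of Lemma~\ref{wie}, and your case $p\le(\log_3 x)^2$ are all fine. You also correctly see that neither the $\ell$-sieve attached to $p$ nor the union bound over pairs $(p,q)$ handles $p$ between $(\log_3 x)^2$ and $\log_2 x$. The repair you propose, however, rests on a sieve condition that non-Wieferich numbers do not satisfy. Lemma~\ref{wie} forbids $p\mid\ell(r)$ only for primes $p$ that \emph{divide $n$}. It says nothing about small primes not dividing $n$; in Erd\H{o}s's argument, too, the forbidden event is ``$p\mid n$ and $p\mid r-1$'', and you have dropped the ``$p\mid n$''. Your $\mathcal P$ (primes $r$ with $\ell(r)$ having a prime factor $\le\log_2 x$) contains every $r\equiv\pm3\pmod 8$, since then $2$ is a non-residue and $\ell(r)$ is even. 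So $\sum_{r\in\mathcal P,\,r\le x}1/r\ge\frac12\log_2 x+O(1)$, and your sieve would save a power of $\log x$, far more than is true. Concretely, $n=1023=3\cdot11\cdot31$ is odd, non-Wieferich ($\ell(n)=10$ and $(2^{10}-1)/n=1$) and non-cyclic ($31\equiv1\pmod 3$), yet its cofactor $11$ has $\ell(11)=10$. If you read step~1 instead as a condition only on the prime factors of $n$, it is automatic for $n\in N$ and is not a sieve condition, so it yields no factor $1/\log_3 x$. Either way, the pairs with $p\in((\log_3x)^2,\log_2x]$ are left untreated.

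The missing idea is that the factor $1/\log_3 x$ comes from $n$ having no prime factor below $z=\log_2 x$ (Mertens: $\prod_{r<z}(1-1/r)\asymp1/\log z$), not from any condition on $\ell(r)$. So split according to whether $n$ has \emph{any} prime factor $\le z$, not according to the size of the non-cyclic prime $p$.

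If it does, let $q$ be the \emph{least} prime factor of $n$ and sieve $n/q$ both by the primes $r<q$ and by the primes $r$ with $q\mid\ell(r)$. The first sieve supplies an extra factor $1/\log q$ that your case $p\le y$ lacks, and a dyadic decomposition gives $\sum_{q\le z}\frac{x}{q\log q}e^{-cz/q}\ll x/(\log_3x)^2$. Without the $1/\log q$ the same sum is $\asymp x/\log_3 x$, which is why bounds from the $\ell$-sieve alone cannot beat $x/\log_3 x$.

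If all prime factors of $n$ exceed $z$, write $n=pqk$ with $p>z$ and $q\equiv1\pmod p$. Dispose of $k=1$ (products of two primes), note $k\ge z$, and bound the number of $k\le x/(pq)$ free of primes $<z$ by $\ll x/(pq\log_3x)$. Then Brun--Titchmarsh and $\log_2x\sum_{p>z}p^{-2}\ll1/\log_3x$ give your step~2. No $\ell$-condition is needed in this case, and your uniformity worry in $pq$ disappears.
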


\begin{proof}
    Let $n \in (N\setminus C)$, with $n \leq x$. For the remainder of the proof we will assume that $n$ is squarefree, as we have a suitable bound on the non-squarefree contribution from Lemma \ref{notsquarefree}. As $n$ is not cyclic, there must be some prime $p \mid n$ such that $p \mid \varphi(n)$. Thus, as $n$ is squarefree, there must be some prime $q \mid n$ such that $p \mid q-1$. We will split such $n$ into two sets according to whether or not $n$ has a prime factor $p$ exceeding $z$. For the remainder of this paper, we will set
    \begin{equation}\label{eq:z}
    z := \log_2(x).    
    \end{equation}
    We consider first $n$ with a prime factor $p\leq z$. Let $q$ be the smallest prime dividing $n$, and write $n = mq$. By Lemma \ref{wie} we have that $m$ is not divisible by any prime $r$ such that $q \mid \ell(r)$. Furthermore, by choice of $q$, we have that $m$ is not divisible by any prime $r <q$. Hence, by Brun's sieve, the number of such $n \leq x$ is
    \begin{align*}
    \frac{x}{q} \prod_{r < q} \bigg( 1 - \frac{1}{r} \bigg) \prod_{\substack{r \leq x/q \\ q \mid \ell(r)}}    \bigg( 1 - \frac{1}{r} \bigg)  & \ll   \frac{x}{q\log(q)}\prod_{\substack{r \leq x/q \\ q \mid \ell(r)}}    \bigg( 1 - \frac{1}{r} \bigg) \\
    & \ll  \frac{x}{q\log(q)} \exp \left( \sum_{\substack{r \leq x/q \\ q \mid \ell(r)}} -\frac{1}{r} \right) \\
    & \ll  \frac{x}{q\log(q)} \bigg( \log(x/q) \bigg)^{-c/q}.
    \end{align*}
\noindent Here we again use Lemma \ref{paul} to estimate the final product on $r$. Now summing over all primes $q \leq z$, and noting that $\log(x/q) \geq \frac{1}{2}\log(x)$, we get that the count of all such $n$ is
$$ \ll \sum_{\text{prime }q \leq z} \frac{x}{q \log(q)} (\log(x))^{-c/q} = \sum_{\text{prime } q\leq z} \frac{x}{q\log(q)}e^{-cz/q}  .$$

We shall split these $q$ into dyadic intervals. Define $I_0 = [2, \frac{cz}{\log_3(x)}]$, and let $I_k = [\frac{2^{k-1}cz}{\log_3(x)}, \frac{2^kcz}{\log_3(x)}]$ for $1 \leq k \leq S$, where $S$ is the smallest integer such that 
$$z < \frac{2^{S+1}cz}{\log_3(x)}.$$ 
\noindent Then $2^{S}c \leq \log_3(x)$, and so $2^S \ll \log_3(x)$. First, note that
\begin{align*}
\sum_{q \in I_0} \frac{x}{q \log(q)} e^{-cz/q} & \leq \frac{x}{\log_2(x)}\sum_{q \in I_0} \frac{1}{q\log(q)} \ll \frac{x}{\log_2(x)},
\end{align*}

\noindent where the sum over $q$ would converge over all primes, so is certainly bounded for $q \in I_0$. For any $k \geq 1$, 
\begin{align*}
\sum_{q \in I_k} \frac{x}{q \log(q)} e^{-cz/q} &\ll x\exp\bigg(\frac{-\log_3(x)}{2^{k+1}}\bigg)\sum_{q \in I_k} \frac{1}{q\log(q)} \\
& \ll \frac{x}{\log_3(x)}\exp\bigg(\frac{-\log_3(x)}{2^{k+1}}\bigg)\sum_{q \in I_k}  \frac{1}{q}.
\end{align*}

Note that for any interval $[T,2T]$, $\sum_{p \in [T,2T]} \frac{1}{p} \leq \frac{1}{T}\pi(2T) \ll \frac{1}{\log(T)}.$ Thus, we see that

$$\sum_{q \in I_k} \frac{x}{q \log(q)} e^{-cz/q} \ll  \frac{x}{(\log_3(x))^2}\exp\bigg(\frac{-\log_3(x)}{2^{k+1}}\bigg).$$

Now we sum over all such $k$. Note that for all $t > 0$ that $e^{-t} < \frac{1}{t}$. We see that
\begin{align*}
 \sum_{k=1}^{S} \sum_{q \in I_k} \frac{x}{q \log(q)} e^{-cz/q}
 &\ll \frac{x}{(\log_3(x))^2}\sum_{k = 1 }^{S}\exp\bigg(\frac{-\log_3(x)}{2^{k+1}}\bigg) \\
 &\ll\frac{x}{(\log_3(x))^2}\sum_{k = 1 }^{S}\frac{2^{k+1}}{\log_3(x)}\\ & \ll \frac{x}{(\log_3(x))^3} 2^{S+2}\ll \frac{x}{(\log_3(x))^2}.
\end{align*}
\noindent Thus, the number of $n \leq x$ where $n \in (N \setminus C)$ with a prime factor $p \leq z$ is $ O\bigg( \frac{x}{(\log_3(x))^2}\bigg)$.

This leaves us with the remaining case: $n \in (N\setminus C)$ with $n$ divisible by primes $p, q > z$ with $q\equiv 1 \mod p$. Write $n=pqk$. The number of $n\leq x$ that are products of two distinct primes is $O(x\log_2(x)/\log(x))$ (\cite[Theorem 437, p. 491]{Wright}), so we can assume $k > 1$. In fact, as $n$ has no prime factors less than $z$, we can assume $k \geq z$. Then the number of such $n$ is at most

$$ \sum_{z \leq p\leq x} \sum_{\substack{q \leq x\\ q \equiv 1 \mod p}}\hspace{.1in} \sum_{\substack{z \leq k \leq x/pq \\ r \mid k  \rightarrow r \geq z}} 1 \ll \sum_{z \leq p\leq x} \sum_{\substack{q \leq x\\ q \equiv 1 \mod p}} \frac{x}{pq\log(z)}$$

\noindent by Brun's sieve and Mertens' theorem. Rewriting, the last expression is
\begin{align*}
\frac{x}{\log_3(x)} \sum_{z \leq p\leq x} \frac{1}{p} \sum_{\substack{q \leq z\\ q \equiv 1 \mod p}} \frac{1}{q} & \ll \frac{x}{\log_3(x)} \sum_{z \leq p\leq x} \frac{1}{p} \frac{\log_2(x)}{p}\\ & \ll \frac{x \log_2(x)}{\log_3(x)} \sum_{p \geq z} \frac{1}{p^2} \\ &\ll \frac{x\log_2(x)}{\log_2(x)(\log_3(x)^2)} = \frac{x}{(\log_3(x))^2}
\end{align*}

\noindent using the Brun-Titchmarsh inequality to bound the sum on $q$, and partial summation for the sum on $p$. Putting it all together we see that $(N \setminus C)(x) = O \bigg(\frac{x}{(\log_3(x))^2}\bigg)$. \qedhere

\end{proof}

The following lemma is the only place where our assumption \eqref{eq:assume} on the count of Wieferich primes is used.

\begin{lemma}\label{cn} Assume \eqref{eq:assume}. Let $(C \setminus N)(x) := \#\{ n \leq x : n \text{ is cyclic, and Wieferich} \}$. We have that

$$(C\setminus N)(x) = O\bigg( \frac{x}{(\log_3(x))^2} \bigg).$$
    
\end{lemma}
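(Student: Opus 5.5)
Cyclic numbers are squarefree, since $p^2 \mid n$ forces $p \mid \varphi(n)$. So every $n$ counted by $(C\setminus N)(x)$ is odd and squarefree, and for distinct primes $p,q \mid n$ we have $p \not\equiv 1 \bmod q$. By Lemma \ref{nonwieferich condition}, if such an $n$ is Wieferich then some prime factor of $n$ must be a Wieferich prime. Hence
$$(C\setminus N)(x) \le \#\{n \le x : n \text{ cyclic},\ p \mid n \text{ for some Wieferich prime } p\}.$$
The whole task is therefore to show that cyclic numbers divisible by a Wieferich prime are rare, using \eqref{eq:assume}.

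I split according to the size of the Wieferich prime $p \mid n$. Cyclic numbers have no two prime factors with $q \equiv 1 \bmod p$. In particular, if $p \mid n$ and $n$ is cyclic, then $n/p$ avoids all primes $r \equiv 1 \bmod p$, and also all primes $r$ with $r \mid p-1$. I will not need the second condition.

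Small Wieferich primes, $p \le y$. Here I want to exploit the first sieve condition. By Brun's sieve and the Brun–Titchmarsh/Page–Siegel–Walfisz estimate $\sum_{r \le x,\, r\equiv 1 \bmod p} 1/r = \frac{\log_2 x}{p-1} + O\big(\frac{\log p}{p}\big)$, the count of $m \le x/p$ free of primes $r\equiv 1 \bmod p$ is
$$\ll \frac{x}{p}\exp\Big(-\frac{\log_2 x}{p}\Big),$$
which is $\ll x/(p\log_2 x)$ once $p \le \log_2(x)/\log_4(x)$ or so. I will handle this range directly. Crudely one could even use $W(t)\ll t/\log^\alpha t$ to see that Wieferich primes below $\log_2 x$ contribute at most $x\sum_{p \le z} e^{-z/p}/p$, with $z=\log_2 x$ as in \eqref{eq:z}. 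That is the same dyadic sum as in Lemma \ref{nc}, except that there the weight was $1/(q\log q)$. With weight $1/p$ the analogous computation gives only $\ll x/\log_3 x$ — too weak by one factor of $\log_3 x$. So here I will use the scarcity of Wieferich primes. By partial summation, \eqref{eq:assume} gives $\sum_{p\in[T,2T],\,p \text{ Wieferich}} 1/p \ll (\log T)^{-\alpha}$, which in the dyadic ranges $I_k$ (where $\log T\asymp \log_3 x$) gives the extra saving of $(\log_3 x)^{-\alpha}$. That is more than enough, and the $I_0$ range contributes $\ll x/\log_2 x$ as before.

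Large Wieferich primes, $p > z$. Here I simply bound the count by
$$\sum_{\substack{p>z\\ p \text{ Wieferich}}} \frac{x}{p} \ll x \int_z^\infty \frac{dW(t)}{t} \ll x\int_z^\infty \frac{dt}{t\log^\alpha t} \ll \frac{x}{(\log z)^{\alpha-1}} = \frac{x}{(\log_3 x)^{\alpha-1}},$$
which is $O(x/(\log_3 x)^2)$ precisely because $\alpha>2$. This explains the hypothesis $\alpha>2$ in \eqref{eq:assume}, and it is the step where the assumption is used essentially.

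The main obstacle is the small-prime range: making sure that the sieve saving $e^{-\log_2 x/p}$ together with the Wieferich density bound gives $O(x/(\log_3 x)^2)$ uniformly, including for $p$ bounded where \eqref{eq:assume} says nothing useful. For those $p$ the factor $e^{-z/p}$ is tiny and gives $x/\log_2 x$ directly. Indeed, for $p\le z/(3\log_3 x)$ we get $e^{-z/p}\le(\log_2 x)^{-3}$, so only $p\in[z/(3\log_3 x),z]$ remains. On this range I would simply apply the partial-summation bound from \eqref{eq:assume}, getting $\ll x(\log_3 x)^{-\alpha}$ and finishing the proof.
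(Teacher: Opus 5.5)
Your reduction step is sound. Cyclic numbers are squarefree and have no pair of prime factors with $q\equiv 1 \bmod p$, so the contrapositive of Lemma \ref{nonwieferich condition} shows that a cyclic Wieferich $n$ has a Wieferich prime factor. The paper argues this directly via $\ell(n)=j\ell(q)$, but your route is equally valid. Your treatment of Wieferich primes $p\le z$, using the saving $e^{-z/p}$ plus the dyadic density bound, is also fine.

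\textbf{The gap is in the range $p>z$.} There you count all multiples of $p$ and get $x\sum_{p>z,\ p\text{ Wieferich}}1/p\ll x/(\log_3 x)^{\alpha-1}$. You then assert this is $O(x/(\log_3 x)^2)$ ``precisely because $\alpha>2$''. That inference is wrong: $\alpha>2$ only gives $\alpha-1>1$. For $2<\alpha<3$ your bound is $o(x/\log_3 x)$ but not $O(x/(\log_3 x)^2)$, so as written you prove the lemma only when \eqref{eq:assume} holds with $\alpha\ge 3$. Moving the split point cannot fix this. For $p>z$ the factor $e^{-z/p}$ is $\gg 1$, and the best that \eqref{eq:assume} gives for the reciprocal sum over Wieferich primes $p>z$ is $(\log z)^{1-\alpha}=(\log_3 x)^{1-\alpha}$. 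The missing factor $1/\log_3 x$ has to come from the cofactor $n/p$.

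\textbf{How the paper gets it.} First remove all cyclic $n\le x$ having some prime factor $\le z$. These number $O(x/(\log_3 x)^2)$ by the argument behind \eqref{eq:cyclic}: sieving with the \emph{smallest} prime factor $q$ of $n$ gains the weight $1/(q\log q)$ instead of $1/q$, exactly as in the first half of Lemma \ref{nc}. For the remaining $n=pm$ with $z<p\le\sqrt{x}$, the cofactor $m$ has no prime factor below $z$. Brun's sieve and Mertens then give the extra factor $\prod_{r<z}(1-1/r)\asymp 1/\log_3 x$, so the total is $x/(\log_3 x)^{\alpha}$. The range $p>\sqrt{x}$ is handled by $\sum_{m\le\sqrt{x}}W(x/m)$. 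With this decomposition, your separate analysis of Wieferich primes $p\le z$ becomes unnecessary.

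\textbf{A repair within your own setup.} Note that $n/p$ is itself cyclic, since $\varphi(d)\mid\varphi(n)$ for $d\mid n$. Then use $C(y)\ll y/\log_3 y$ with $y=x/p\ge\sqrt{x}$, and $\log_3(x/p)\sim\log_3 x$. This supplies the same extra factor of $1/\log_3 x$ for $z<p\le\sqrt{x}$, and keeps your small-prime analysis intact.
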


\begin{proof}
Suppose $n$ is counted by $(C \setminus N)(x)$. Then $n$ is both squarefree and there exists $q \mid n$ such that $q \mid (2^{\ell(n)}-1)/n$. Write $n = q \cdot p_1  \cdots p_k$, with $(n,\varphi(n)) =1$. Writing $\ell(n) = j\cdot\ell(q)$, we see that
$$q \mid \bigg(\frac{2^{\ell(q)}-1}q \bigg)\bigg(2^{\ell(q)(j-1)}+2^{\ell(q)(j-2)} +\dots + 2^{\ell(q)} \bigg),$$ 

\noindent where the rightmost term is congruent to $j \mod q$. If $q$ were to divide that term, then
$$q\mid j \mid \ell(n/q) =  \text{lcm}(\ell(p_1),\ell(p_2),\cdots, \ell(p_k)).$$ 

\noindent Hence, for some $1 \leq i \leq k$, it must be true that
$$q \mid \ell(p_i) \mid \varphi(p_i) \mid \varphi(n)$$

\noindent contradicting $\gcd(n,\varphi(n)) =1$. Therefore, $q$ must divide the left term, and so $q$ is Wieferich.

We will now count the number of $n\leq x$ divisible by a Wieferich prime. We split this collection of $n$ into two sets according to whether or not $n$ has a prime factor $p \leq z$. Here, $z$ is as in \eqref{eq:z}. To handle $n$ with no prime factor at most $z$, notice that
$$\# \{ n \leq x : n = pm \text{ where $p>z$ is a Wieferich prime and $r\mid m \implies r>z$} \}$$
$$ \leq \sum_{m \leq \sqrt{x}}W\bigg(\frac{x}{m}\bigg)+\sum_{\substack{z<p\leq \sqrt{x}\\ p\text{ Wieferich}}}\#\{m<x/p : r\mid m \implies r>z\}.$$

Using our assumed bound \eqref{eq:assume} of $W(y) = O(y/\log^\alpha(y))$ for some $\alpha>2$, we get that the number of these $n$ is
\begin{align*}
& \ll \sum_{m\leq \sqrt{x}}\frac{x/m}{\log^\alpha(x)} + \sum_{\substack{z<p\leq \sqrt{x}\\ p\text{ Wieferich}}}\frac{x}{p}\prod_{\substack{q \text{ prime} \\ q<z}} \bigg( 1 - \frac{1}{q}\bigg)  \\ &\ll \frac{x}{\log^\alpha(x)} \sum_{m\leq \sqrt{x}} \frac{1}{m} + \frac{x}{\log_3(x)} \sum_{\substack{p>z\\ p\text{ Wieferich}}}\frac{1}{p} \\ & \ll \frac{x}{\log^{\alpha-1}(x)} + \frac{x}{\log_3(x)}\frac{1}{\log_3^{\alpha-1}(x)}
\end{align*}
\noindent using partial summation over the Wieferich primes. Thus, the number of such $n\leq x$ is $O\bigg(\frac{x}{(\log_3(x))^2} \bigg)$.

The case where $n$ is divisible by a prime $p\leq z$, is handled in the proof of \eqref{eq:cyclic} in \cite{Vaughan}. It is shown in \cite{Vaughan} on page 388 that 
$$\# \{ n \leq x : (n,\varphi(n)) = 1 \text{ and } \exists p \text{ prime with }  p \mid n \text{ and } p\leq z \} \ll \frac{x}{(\log_3(x))^2}.$$

Hence, $(C \setminus N)(x) = O(x/(\log_3(x))^2)$. \qedhere

\end{proof}

\begin{proof}[Proof of Theorem \ref{main}]Combining Lemmas \ref{nc} and \ref{cn}, we see that
$$S(x) = (C\setminus N)(x) + (N \setminus C)(x) = O\bigg( \frac{x}{(\log_3(x))^2} \bigg)$$

\noindent which concludes the proof of Theorem \ref{main}.  \qedhere

\end{proof}

\begin{proof}[Proof of Corollary \ref{asyms}]

Combining Lemma \ref{nc} and \eqref{eq:cyclic}, we get the following unconditional upperbound:
\begin{equation}\label{eq:Nupper}
    N(x) \leq C(x) + (N\setminus C)(x) \sim \frac{e^{-\gamma}x}{\log_3(x)}.
\end{equation}

\noindent Moreover, with the assumption of \eqref{eq:assume}, applying Theorem \ref{main} gives us a matching lowerbound,
$$N(x) \geq C(x) - (C\setminus N)(x) \sim  \frac{e^{-\gamma}x}{\log_3(x)},$$

\noindent concluding the proof of Corollary \ref{asyms}. \qedhere

\end{proof}

Recall that $U = \{  n \leq x : n \text{ is not $\lambda$-Wieferich}\}$ and $U(x) = \#\{n\leq x : n \in U \}$. As $U \subseteq N$,
\begin{equation}\label{eq:Uupper}
    U(x) \leq N(x).
\end{equation}

\begin{lemma}\label{diff} $(N \setminus U) \subset (N \setminus C)$.

\end{lemma}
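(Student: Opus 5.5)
The inclusion $(N\setminus U)\subset(N\setminus C)$ says: if $n\in N$ is $\lambda$-Wieferich, then $n$ is not cyclic. I will prove the contrapositive: an odd cyclic number $n\in N$ lies in $U$.

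First I reduce to the squarefree case. A cyclic $n$ is automatically squarefree, since $p^2\mid n$ forces $p\mid\varphi(n)$. So write $n=p_1\cdots p_k$ with distinct odd primes. Cyclicity gives $p_i\nmid p_j-1$ for all $i,j$, so no $p_i$ divides $\varphi(n)=\prod_j(p_j-1)$.

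Next I compare $\lambda(n)$ with $\ell(n)$. By definition $\lambda(n)$ is the exponent of $(\mathbb{Z}/n\mathbb{Z})^\times$, and $\ell(n)\mid\lambda(n)\mid\varphi(n)$. Write $\lambda(n)=j\,\ell(n)$, so $j\mid\varphi(n)$, and hence $\gcd(j,n)=1$ by cyclicity.

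Now fix a prime $p\mid n$. Since $n\in N$, we have $p\nmid(2^{\ell(n)}-1)/n$, i.e.\ $p\parallel 2^{\ell(n)}-1$. Write $2^{\ell(n)}=1+ap$ with $p\nmid a$. The binomial expansion, exactly as in Lemma \ref{nonwieferich condition}, gives
$$2^{\lambda(n)}=(1+ap)^j\equiv 1+jap \pmod{p^2}.$$
Because $p\nmid j$ and $p\nmid a$, we get $p\parallel 2^{\lambda(n)}-1$.

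Equivalently, one can factor
$$2^{\lambda(n)}-1=(2^{\ell(n)}-1)\bigl(1+2^{\ell(n)}+\dots+2^{\ell(n)(j-1)}\bigr),$$
where the second factor is $\equiv j\not\equiv 0\pmod p$. Either way, $p$ divides $2^{\lambda(n)}-1$ to exactly the first power. Since $n$ is squarefree, this holds for every $p\mid n$, so $\gcd\bigl((2^{\lambda(n)}-1)/n,\,n\bigr)=1$. Thus $n$ is not $\lambda$-Wieferich, i.e.\ $n\in U$.

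This shows $N\cap C\subseteq U$, which is equivalent to $N\setminus U\subseteq N\setminus C$.

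There is no serious obstacle. The only point needing care is that the cofactor $j=\lambda(n)/\ell(n)$ is coprime to $n$, and this comes from $\lambda(n)\mid\varphi(n)$ together with cyclicity.
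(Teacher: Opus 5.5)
Your proof is correct and is essentially the paper's argument in contrapositive form. Both rest on the fact that if $m\mid\lambda(n)$ and $p\mid 2^{m}-1$, then $(2^{\lambda(n)}-1)/(2^{m}-1)\equiv \lambda(n)/m \pmod p$, where $\lambda(n)/m$ divides $\lambda(n)\mid\varphi(n)$.

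The one difference is the choice of $m$. You take $m=\ell(n)$ and use $n\in N$ directly, after noting that cyclic numbers are squarefree. The paper takes $m=\ell(p)$ and invokes Lemma \ref{baby} to know that $p$ is non-Wieferich, which tacitly requires $n$ squarefree. Your version avoids that lemma and makes the squarefree reduction explicit.
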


\begin{proof}

Let $n \in (N \setminus U)$. As $n \notin U$, there exists a prime $p \mid n$ such that $p^2 \mid 2^{\lambda(n)}-1$.  By Lemma \ref{baby}, as $n$ is non-Wieferich, $p$ must also be non-Wieferich, and thus $p^2 \nmid 2^{\ell(p)}-1$. As $\ell(p) \mid \lambda(n)$, write $\lambda(n) = \ell(p)k$ for some $k$. Then we see that $p$ must divide the following:
$$\frac{2^{\lambda(n)}-1}{2^{\ell(p)}-1} = 2^{\ell(p)(k-1)}+2^{\ell(p)(k-2)}+ \cdots + 2^{\ell(p)} + 1.$$

\noindent But each term on the right is $1 \mod p$, and there are $k$ terms, so the entire quantity is $k \mod p$. Thus $p \mid k \mid \lambda(n) \mid \varphi(n)$. Therefore, $n \notin C$.

\end{proof}

\begin{proof}[Proof of Theorem \ref{improved}] By Lemma \ref{diff}, we get
$$U(x) = N(x) - (N\setminus U)(x) \geq N(x) - (N \setminus C)(x).$$

\noindent Combined with Lemma \ref{nc} and \eqref{eq:Uupper}, we see
$$ U(x) = N(x) + O\bigg(\frac{x}{(\log_3(x))^2} \bigg)$$
    
\noindent unconditionally. Moreover, with the assumption of \eqref{eq:assume}, applying Corollary \ref{asyms} gives that
$$U(x) \sim \frac{e^{-\gamma}x}{\log_3(x)}$$

\noindent concluding the proof of Theorem \ref{improved}. \qedhere

\end{proof}

\bibliographystyle{alpha}
\bibliography{mainbib}

\end{document}